\numberwithin{equation}{section}
\newcommand{\hamiltonian}{{\bf h}}
\newcommand{\potential}{{\bf v}}
\newcommand{\dispersion}{\mathfrak{e}}
\newcommand{\p}{\partial}
\renewcommand{\epsilon}{\varepsilon}
\renewcommand{\hat}{\widehat}
\renewcommand{\Re}{{\rm Re}}
\newcommand{\C}{\mathbb{C}}
\newcommand{\R}{\mathbb{R}}
\newcommand{\T}{\mathbb{T}}
\newcommand{\Z}{\mathbb{Z}}
\newcommand{\cF}{\mathcal{F}}
\newcommand{\sign}{{\ensuremath{\mathrm{sign}}}}
\newcommand{\ess}{{\ensuremath{\mathrm{ess}}}}
\numberwithin{equation}{section}
\theoremstyle{plain}
\newtheorem{theorem}{Theorem}[section]
\newtheorem{lemma}[theorem]{Lemma}
\theoremstyle{definition}
\date{\today}
\begin{document}
\title[]{Asymptotics of eigenvalues of the zero-range perturbation of the discrete bilaplacian}

\author[Sh. Kholmatov] {Shokhrukh Yu. Kholmatov} 
\address[Shokhrukh Yu. Kholmatov]{University of Vienna\\ Oskar-Morgenstern Platz 1\\1090 Vienna  (Austria)}
\email[Sh. Kholmatov]{shokhrukh.kholmatov@univie.ac.at}

\author[M. Pardabaev] {Mardon  Pardabaev} 
\address[Mardon Pardabaev]{Samarkand State University\\ 
University boulevard 3\\ 140104 Samarkand (Uzbekistan)}
\email[M. Pardabaev]{p\_mardon75@mail.ru}

\subjclass[2010]{47A10,47A55,47A75,41A60}

\keywords{Discrete bilaplacian, essential spectrum, discrete spectrum, eigenvalues, asymptotics, expansion }

\begin{abstract}
We consider the family 
$$
\hat \hamiltonian_\mu:=\hat\varDelta\hat \varDelta - \mu \hat \potential,\qquad\mu\in\R,
$$ 
of discrete Schr\"odinger-type operators in one-dimensional lattice $\Z$, 
where $\hat \varDelta$ is the discrete Laplacian and $\hat\potential$ 
is of zero-range.  We prove that for any $\mu\ne0$  the discrete spectrum of $\hat \hamiltonian_\mu$ is  a singleton $\{e(\mu)\},$ and  $e(\mu)<0$ for $\mu>0$ and  $e(\mu)>4$ for $\mu<0.$ Moreover, we study the 
properties of $e(\mu)$ as a function of $\mu,$ in particular, we find the asymptotics of $e(\mu)$ as $\mu\searrow0$ and $\mu\nearrow0.$
\end{abstract}

\maketitle

\section{Introduction} 

In this paper we study the spectral properties of the family
$$
\hat \hamiltonian_\mu:=\hat\hamiltonian_0 - \mu\hat\potential,\qquad \mu\in\R,
$$
of self-adjoint bounded discrete Schr\"odinger-type operators in the Hilbert space $\ell^2(\Z)$ of square-summable complex-valued functions defined on the one-dimensional lattice $\Z.$  Here $\hat \hamiltonian_0$ is discrete bilaplacian, i.e. 
$$
\hat \hamiltonian_0:=\hat \varDelta\hat\varDelta,
$$
where
$$
\hat\varDelta\hat  f(x) =\hat f(x) - \frac{\hat f(x+1) + \hat f(x-1)}{2},\qquad \hat f\in \ell^2(\Z),
$$
is the discrete Laplacian,
and $\hat \potential$ is a rank-one operator 
$$
\hat \potential \hat f(x) = \begin{cases}
                            \hat  f(0), & x=0,\\
                             0, &x\ne0.
                            \end{cases}
$$ 
This model can be considered as the discrete Schr\"odinger operator on $\Z,$ associated to a system of one particle whose dispersion relation has a degenerate bottom. 

The spectral theory of discrete Schr\"odinger operators  with non-degenerate bottom in particular, with discrete Laplacian, have been extensively studied in recent years (see e.g. \cite{ALM:04:Puan,ALMM:2006,GSch:97,LKL:2012,LH:2011,Mat,Mog} and references therein) because of their applications in the theory of ultracold atoms in optical lattices \cite{JBC:1998,LSA:2012,Wall:2015,Wink:2006}. For these models the appearance of weakly coupled bound states has been sufficiently well-understood and  sufficient and necessary conditions for the existence of discrete spectrum in terms of the coupling constant have been established. This conditions naturally leads to the coupling constant threshold phenomenon \cite{KS:1980}: consider $-\Delta +\lambda V$ with $V$ short range  at a value $\lambda_0,$ where some eigenvalue $e(\lambda)$ is absorbed into the continuous spectrum as $\lambda\searrow \lambda_0,$ and conversely, for any $\epsilon>0,$ as $\lambda\nearrow \lambda_0+\epsilon$ the continuous spectrum gives birth to a new eigenvalue. This phenomenon and the absorption  rate of eigenvalues as $\lambda\to\lambda_0$ have been established for discrete Schr\"odinger operators with non-degenerate bottom, for example, in \cite{LKL:2012,LH:2011,LH:2012} and continuous Schr\"odinger operators, for example, in \cite{K:1977,KS:1980,S:1976}. 

In the case of Schr\"odinger operators with degenerate bottom some sufficient conditions for existence of bound states have been recently observed in \cite{HHV:2018}. However, to the best of our knowledge the results related to the absorption rate of eigenvalues have not been published yet. 

Recall that 
$$
\sigma(\hat \varDelta) = \sigma_\ess(\hat \varDelta) =[0,2], 
$$
thus, from the spectral theory  it follows that 
$$
\sigma(\hat \hamiltonian_0) = \sigma_\ess(\hat \hamiltonian_0) =[0,4],
$$
and hence, by the compactness of $\hat \potential $ and   Weyl's Theorem,
$$
\sigma_\ess(\hat \hamiltonian_\mu) = \sigma_\ess(\hat \hamiltonian_0) =[0,4]
$$
for any $\mu\in\R.$

Let   
$$
\cF:\ell^2(\Z)\to L^2(\T),\qquad \cF\hat f(p) = \frac{1}{\sqrt{2\pi }}\,\sum\limits_{x\in\Z} \hat f(x) e^{ixp} 
$$
be the standard Fourier transform with the inverse 
$$
\cF^{-1}:L^2(\T)\to \ell^2(\Z),\qquad \cF^{-1}f(x) = \frac{1}{\sqrt{2\pi}}\,\int_{\T} f(p) e^{-ixp}dp. 
$$
The operator $\hamiltonian_\mu = \cF\hat \hamiltonian_\mu\cF^{-1}$ is called the {\it momentum representation} of $\hat\hamiltonian_\mu.$ Note that 
\begin{equation}\label{hamilt_momentum}
\hamiltonian_\mu =\hamiltonian_0 - \mu\potential, 
\end{equation}
where $\hamiltonian_0:= \cF\hat \hamiltonian_0\cF^{-1}$ is the multiplication operator in $L^2(\T)$ by 
$$
\dispersion(q): =(1 - \cos q)^2,
$$
and $\potential: = \cF\hat \potential \cF^{-1}$  is the rank-one integral operator 
$$
\potential f(p) = \frac{1}{\sqrt{2\pi}} \int_\T f(q)dq.
$$

The main result of the paper is the following.

\begin{theorem}\label{teo:main_result}
For any $\mu\ne0$ the operator $\hamiltonian_\mu$ has a unique eigenvalue $e(\mu)$
outside the essential  spectrum with the associated eigenfunction 
\begin{equation}\label{eig_function}
f_\mu(p) = \frac{1}{\dispersion(q) - e(\mu)}. 
\end{equation}
Moreover:
\begin{itemize}
\item[(a)] $e(\mu)<0$ for any $\mu>0$ and $e(\mu)>4$ for any $\mu<0;$ 

\item[(b)] the function $\mu\in\R\setminus\{0\}\mapsto e(\mu)$ is 
real-analytic strictly decreasing, strictly convex in $(-\infty,0)$ and strictly concave in $(0,+\infty)$ with asymptotics
$$
\lim\limits_{\mu\to\pm\infty} \frac{e(\mu)}{\mu} = -1
$$
and 
\begin{equation*}
\lim\limits_{\mu\nearrow0} \frac{e(\mu) - 4}{\mu^2} =  \frac18,\qquad  
\lim\limits_{\mu\searrow0} \frac{e(\mu)}{\mu^{4/3}} = -\frac{1}{2^{4/3}};
\end{equation*}

\item[(c)] there exists $\gamma>0$ such that  
\begin{equation}\label{asymp1}
(e(\mu) - 4)^{1/2} = -\frac{\mu}{2\sqrt2} - \frac{\mu^2}{256} - \frac{27\mu^3}{9192\sqrt2} -\sum\limits_{n\ge4} C_n \mu^n  
\end{equation}
for any $\mu\in(-\gamma,0)$
and
\begin{equation}\label{asymp2}
(-e(\mu))^{1/4} =  
\frac{\mu^{1/3}}{2^{1/3}}+\frac{\mu}{24}-\frac{2^{1/3}\mu^{5/3}}{288} +\sum\limits_{n\ge3} D_n\mu^{(2n+1)/3}
\end{equation}
for any $\mu\in(0,\gamma),$ where $\{C_n\}$ and $\{D_n\}$ are some real coefficients.
\end{itemize}
\end{theorem}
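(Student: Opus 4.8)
The plan is to reduce the eigenvalue problem to a scalar (Birman--Schwinger-type) secular equation and then study a single explicit function of $e$. Since $\potential$ is rank one and $\sigma_\ess(\hamiltonian_\mu)=[0,4]$, for $e\in\R\setminus[0,4]$ the multiplication operator $\hamiltonian_0-e$ is boundedly invertible and $(\hamiltonian_0-\mu\potential-e)f=0$ forces $f$ to be a scalar multiple of $(\hamiltonian_0-e)^{-1}\mathbf 1=(\dispersion(\cdot)-e)^{-1}$, which is precisely \eqref{eig_function}; feeding this back into the equation, $e$ is an eigenvalue if and only if
\[
\mu\,\Delta(e)=1,\qquad \Delta(z):=\frac{1}{2\pi}\int_\T\frac{dq}{\dispersion(q)-z}.
\]
First I would record that $\Delta$ is real-analytic and strictly increasing on each of $(-\infty,0)$ and $(4,+\infty)$, since $\Delta'(z)=\frac1{2\pi}\int_\T(\dispersion-z)^{-2}\,dq>0$. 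Evaluating the four endpoint limits --- using that $\dispersion$ has a (quartic) minimum at $q=0$ and a (quadratic) maximum at $q=\pi$ --- gives $\Delta\to0^+$ as $e\to-\infty$, $\Delta\to+\infty$ as $e\to0^-$, $\Delta\to-\infty$ as $e\to4^+$, and $\Delta\to0^-$ as $e\to+\infty$. Thus $\Delta$ maps $(-\infty,0)$ bijectively onto $(0,+\infty)$ and $(4,+\infty)$ bijectively onto $(-\infty,0)$, so $\mu\Delta(e)=1$ has a unique root $e(\mu)\in(-\infty,0)$ for each $\mu>0$ and a unique root $e(\mu)\in(4,+\infty)$ for each $\mu<0$; this yields existence, uniqueness and part (a) simultaneously.

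For the qualitative statements in (b) I would differentiate $\mu\Delta(e(\mu))=1$. Real-analyticity of $e(\mu)$ follows from the analytic implicit function theorem applied to $(\mu,e)\mapsto\mu\Delta(e)-1$, whose $e$-derivative $\mu\Delta'(e)$ never vanishes. Differentiating once gives $e'(\mu)=-1/\big(\mu^2\Delta'(e)\big)<0$, so $e$ is strictly decreasing. Differentiating again and eliminating $\mu$ through $\mu=1/\Delta(e)$ yields
\[
e''(\mu)=\frac{1}{\mu^3\Delta'(e)}\Big(2-\frac{\Delta(e)\Delta''(e)}{\Delta'(e)^2}\Big).
\]
The Cauchy--Schwarz inequality applied to $(\dispersion-e)^{-1/2}$ and $(\dispersion-e)^{-3/2}$ gives $2\Delta'(e)^2\le\Delta(e)\Delta''(e)$ (with the correct signs on each interval, as the integrands change sign together), so the parenthesis is strictly negative; tracking the sign of $\mu^3\Delta'(e)$ on each interval then produces strict concavity on $(0,+\infty)$ and strict convexity on $(-\infty,0)$.

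The leading asymptotics in (b) come from the local structure of $\Delta$ at the two singular endpoints. Near $e=0^-$ the only singularity of the integrand is the quartic zero $\dispersion(q)\approx q^4/4$ at $q=0$; rescaling $q=(4|e|)^{1/4}u$ and using $\int_\R(1+u^4)^{-1}du=\pi/\sqrt2$ gives $\Delta(e)=\tfrac12|e|^{-3/4}(1+o(1))$, whence $\mu\Delta(e)=1$ forces $e(\mu)\sim-\mu^{4/3}/2^{4/3}$. Near $e=4^+$ the maximum at $q=\pi$ is nondegenerate, $\dispersion(q)\approx4-2(q-\pi)^2$, and the same localization gives $\Delta(e)\sim-\big(2\sqrt2\,(e-4)^{1/2}\big)^{-1}$, so $(e(\mu)-4)\sim\mu^2/8$. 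The limits $e(\mu)/\mu\to-1$ as $\mu\to\pm\infty$ follow from $\Delta(e)=-\tfrac1e+O(e^{-2})$ for $|e|\to\infty$, which gives $\mu\sim-e$.

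The expansions (c) are the heart of the matter and need the exact value of $\Delta$. Writing $z=e^{iq}$ and $\dispersion=\tfrac{(z-1)^4}{4z^2}$, the integral becomes a rational contour integral over $|z|=1$; factoring the quartic denominator as $\big[(z-1)^2-2\sqrt e\,z\big]\big[(z-1)^2+2\sqrt e\,z\big]$ and summing the residues at the two roots inside the disc yields a closed form of the shape
\[
\Delta(e)=\frac{1}{2e^{3/4}}\left(\frac{1}{\sqrt{\sqrt e-2}}-\frac{1}{\sqrt{\sqrt e+2}}\right),
\]
valid on each interval once the branches of the radicals are fixed so that $\Delta$ is real there. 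I would then set $w=(e-4)^{1/2}$ and $\kappa=(-e)^{1/4}$. Near $e=4$ the factors $\sqrt e$, $e^{3/4}$ and $(\sqrt e+2)^{-1/2}$ are even analytic in $w$, while $(\sqrt e-2)^{-1/2}=\tfrac2w(1+O(w^2))$ is the only odd contribution; hence $\mu=1/\Delta(e)$ extends analytically in $w$ with $\mu(0)=0$ and $\mu'(0)\neq0$, and the analytic inverse function theorem produces a convergent series $w=\sum_{n\ge1}c_n\mu^n$ on some $(-\gamma,0)$, which is \eqref{asymp1}. Near $e=0$ one has $\sqrt e=i\kappa^2$, entering only through $\kappa^2$, while the prefactor $e^{-3/4}$ contributes $\kappa^{-3}$, so $\mu=1/\Delta(e)$ is an odd analytic function of $\kappa$ vanishing to order three; inverting $\mu^{1/3}=\kappa\,h(\kappa^2)$ gives $\kappa$ as an odd analytic series in $\mu^{1/3}$, that is \eqref{asymp2} with exponents $(2n+1)/3$. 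The leading coefficients of both series are pinned down by the computations of the previous paragraph. The main obstacle I anticipate is exactly this last stage: controlling the branch cuts so that the closed form is genuinely real on each interval, verifying the evenness in $w$ and oddness in $\kappa$ that are responsible for the two different exponent patterns, and establishing convergence of the inverted series so as to produce the single $\gamma>0$; the explicit low-order coefficients then reduce to a careful but routine Taylor/Lagrange-inversion calculation.
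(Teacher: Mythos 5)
Your proposal follows essentially the same route as the paper: the rank-one reduction to the secular equation $\mu\Delta(e)=1$ is Lemma \ref{lem:determinantga_kelish}, the monotonicity-plus-endpoint-limits argument yields existence, uniqueness and (a) exactly as in Section \ref{sec:proof_mains}, and parts (b)--(c) are the same implicit-differentiation and residue-plus-analytic-inversion scheme built on the closed form of Lemma \ref{lem:det_hisob}. Two of your variations are correct and worth keeping: the strict Cauchy--Schwarz inequality $2\Delta'(e)^2<\Delta(e)\Delta''(e)$ (applied to $|\dispersion-e|^{-1/2}$ and $|\dispersion-e|^{-3/2}$, with the sign bookkeeping you indicate on $(4,\infty)$) gives convexity/concavity more conceptually than the paper's direct sign-tracking of its formula for $e''$; and your rescalings $\Delta(e)\sim\frac12|e|^{-3/4}$ as $e\nearrow0$ and $\Delta(e)\sim-\bigl(2\sqrt2\,(e-4)^{1/2}\bigr)^{-1}$ as $e\searrow4$ obtain the threshold limits in (b) without the exact integral, which the paper needs even for these. (Your tacit restriction to real $e$ is harmless, by self-adjointness.)

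The one step that fails as written is your closed form for $\Delta$. Summing the two residues correctly, the contributions of the two quadratic factors enter with the \emph{same} sign, and the total carries an overall minus: for $e>4$,
\[
\Delta(e)=-\frac{1}{2e^{3/4}}\left(\frac{1}{\sqrt{\sqrt{e}-2}}+\frac{1}{\sqrt{\sqrt{e}+2}}\right),
\]
which is equivalent to the paper's \eqref{Iz_def}. Your difference-with-overall-plus cannot be rescued by branch choices: on $(4,\infty)$ every radical is already real and positive in the principal branch, yet your expression is positive there (at $e=5$ it gives about $+0.235$, while the integral is about $-0.380$) and it blows up to $+\infty$ as $e\searrow4$, contradicting $\dispersion-e<0$ and your own localization asymptotics. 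The structural half of your part (c) survives, because it uses only the shape of the formula: $\mu=1/\Delta$ is analytic in $w=(e-4)^{1/2}$ with nonvanishing derivative at $w=0$, and $\mu=\kappa^3H(\kappa^2)$ in $\kappa=(-e)^{1/4}$, so the exponent patterns $n$ and $(2n+1)/3$ and the leading coefficients (which you pinned independently via the threshold asymptotics) are correctly explained. But the higher coefficients that you defer as ``routine'' would come out wrong if computed from the displayed formula --- and this endgame is exactly where the care is needed: inverting the corrected closed form gives $(e(\mu)-4)^{1/2}=-\frac{\mu}{2\sqrt2}+\frac{\mu^2}{32}+\frac{3}{512\sqrt2}\,\mu^3+O(\mu^4)$ (numerical check: $e=4+10^{-4}$ corresponds to $\mu\approx-0.0282141$), which matches the leading term of \eqref{asymp1} but not the printed $\mu^2$, $\mu^3$ coefficients; note in this connection that the paper's own reduction \eqref{eq_for_alpha1} contains a slip ($(1+\frac{\alpha}{4})^{3/4}$ where $(1+\frac{\alpha^2}{4})^{3/4}$ is meant), so the subleading coefficients in part (c) must in any case be recomputed carefully rather than dispatched as routine.
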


\section{Proof of the main results}\label{sec:proof_mains}

We start with the following lemma in which we get an equation for eigenvalues of $\hamiltonian_\mu.$  

\begin{lemma}\label{lem:determinantga_kelish}
$z\in\C\setminus[0,4]$ is an eigenvalue of $\hamiltonian_\mu$ if and only if 
$\Delta(\mu;z)=0,$ where
$$
\Delta(\mu;z):=1 - \frac{\mu}{2\pi}\,\int_\T \frac{dq}{\dispersion(q) - z}.
$$
\end{lemma}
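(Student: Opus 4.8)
The plan is to exploit the rank-one structure of $\potential$ together with the invertibility of $\hamiltonian_0-z$ away from its spectrum, thereby reducing the infinite-dimensional eigenvalue equation to a single scalar equation (a Birman--Schwinger type reduction). Since $\sigma(\hamiltonian_0)=[0,4]$ and $\hamiltonian_0$ is multiplication by the continuous, $[0,4]$-valued function $\dispersion$, for $z\in\C\setminus[0,4]$ the quantity $\dispersion(q)-z$ is bounded away from $0$; hence $\hamiltonian_0-z$ is boundedly invertible with $((\hamiltonian_0-z)^{-1}g)(q)=g(q)/(\dispersion(q)-z)$. In particular the constant function $\mathbf 1$ is sent to the bounded (so $L^2(\T)$) function $q\mapsto 1/(\dispersion(q)-z)$, and the integral $\int_\T \frac{dq}{\dispersion(q)-z}$ converges. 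I would record these observations first, as they justify every manipulation below.

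For the forward direction, suppose $\hamiltonian_\mu f=zf$ with $f\neq0$. Writing $\hamiltonian_\mu=\hamiltonian_0-\mu\potential$ turns this into $(\hamiltonian_0-z)f=\mu\potential f$. Now $\potential f=\tfrac{1}{2\pi}\big(\int_\T f(q)\,dq\big)\mathbf 1$ is a scalar multiple of $\mathbf 1$ (this being exactly the kernel produced by $\cF\hat\potential\cF^{-1}$), so applying $(\hamiltonian_0-z)^{-1}$ gives
$$
f(p)=\frac{\mu}{2\pi}\Big(\int_\T f(q)\,dq\Big)\frac{1}{\dispersion(p)-z}.
$$
Setting $c:=\tfrac{1}{2\pi}\int_\T f(q)\,dq$, I would argue $c\neq0$: if $c=0$ then $\potential f=0$, whence $(\hamiltonian_0-z)f=0$ and invertibility forces $f=0$, a contradiction. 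Thus $c\neq0$; the displayed formula identifies $f$, up to the nonzero factor $\mu c$, with the eigenfunction \eqref{eig_function}, and integrating the identity over $\T$ and dividing by $c$ yields $1=\tfrac{\mu}{2\pi}\int_\T\frac{dq}{\dispersion(q)-z}$, i.e. $\Delta(\mu;z)=0$.

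For the converse, assume $\Delta(\mu;z)=0$ and set $f_\mu(p)=1/(\dispersion(p)-z)$, which lies in $L^2(\T)$ by the bound above and is nonzero. Then $(\hamiltonian_0 f_\mu)(p)=\dispersion(p)f_\mu(p)$, while $\mu\potential f_\mu=\tfrac{\mu}{2\pi}\big(\int_\T\frac{dq}{\dispersion(q)-z}\big)\mathbf 1=\mathbf 1$ precisely because $\Delta(\mu;z)=0$. Since $\frac{\dispersion(p)}{\dispersion(p)-z}-1=\frac{z}{\dispersion(p)-z}=zf_\mu(p)$, we get $(\hamiltonian_0-\mu\potential)f_\mu=\dispersion f_\mu-\mathbf 1=zf_\mu$, so $z$ is an eigenvalue with eigenfunction $f_\mu$.

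The argument is essentially routine once the rank-one reduction is in place; the only points needing (easy) care are the membership of $1/(\dispersion(\cdot)-z)$ in $L^2(\T)$ and the convergence of the defining integral, both guaranteed by $z\notin[0,4]$, together with the verification that $c\neq0$ so that one obtains a genuine eigenfunction rather than the trivial solution. I do not anticipate a real obstacle: the statement is the standard equivalence between eigenvalues of a rank-one perturbation and the zeros of its scalar Fredholm determinant $\Delta(\mu;z)=\det\big(I-\mu\potential(\hamiltonian_0-z)^{-1}\big)$.
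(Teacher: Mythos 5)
Your proof is correct and follows essentially the same route as the paper: both directions reduce the eigenvalue equation via the rank-one structure to the representation $f(p)=c\,\mu/(\dispersion(p)-z)$, argue $c\neq0$ (else $f=0$), and verify the converse by checking directly that $f_\mu(p)=1/(\dispersion(p)-z)$ is an eigenfunction when $\Delta(\mu;z)=0$. Your version merely makes explicit the bounded invertibility of $\hamiltonian_0-z$ and the $L^2$ membership that the paper leaves implicit.
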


\begin{proof}
Suppose that $z\in \C\setminus[0,4]$  is an eigenvalue of $\hamiltonian_\mu$ with an associated eigenfunction $0\ne f\in L^2(\T).$
Then from $\hamiltonian_\mu f=zf$ it follows that 
\begin{equation} \label{f_p}
f(p) = \frac{C}{\dispersion(p) -z},
\end{equation}
where 
\begin{equation}\label{C}
C:=\frac{\mu}{2\pi} \int_\T f(q)dq.
\end{equation}
Inserting the representation \eqref{f_p} of $f$ in \eqref{C} we get
\begin{equation}\label{delta} 
C= \frac{C\mu}{2\pi} \int_\T \frac{dq}{\dispersion(q) - z}. 
\end{equation}
Since $C\ne0$ (otherwise $f=0$ by \eqref{f_p}), 
from \eqref{delta}  it follows that $\Delta(\mu;z)=0.$

Conversely, suppose that $\Delta(\mu;z)=0$ for some $z\in\C\setminus[0,4],$ 
and set 
$$
f(p):=\frac{1}{\dispersion(p) - z}.
$$
Then $f\in L^2(\T)\setminus\{0\}$ and
$$
(\hamiltonian_\mu - z)f(p) = 1 - \frac{\mu}{2\pi}\int_\T \frac{dq}{\dispersion(q) - z} = \Delta(\mu;z) =0,
$$
i.e.  $z$ is an eigenvalue of $\hamiltonian_\mu$ with associated eigenvector $f.$
\end{proof}

Notice that 
$$
z\in\C\setminus[0,4] \mapsto \Delta(\mu;z) 
$$ 
is analytic for any $\mu\in\R$ and and 
$$
\frac{\p }{\p z}\,\Delta(\mu;z) = -\mu \int_\T\frac{dq}{(\dispersion(q) - z)^2}.
$$
Thus, for any $\mu>0$ resp. for any $\mu<0,$ the function $\Delta(\mu;\cdot)$ is strictly decreasing resp. strictly increasing in $\R\setminus [0,4].$ Moreover,
\begin{equation}\label{Delta_xosdda}
\lim\limits_{z\to\pm\infty} \Delta(\mu;z) =1.  
\end{equation}

\begin{lemma}\label{lem:det_hisob}
For any  $z\in\R\setminus[0,4],$
\begin{equation}\label{Iz_def}
\frac1{\sqrt{2}\pi}\,\int_\T\frac{dq}{\dispersion(q) - z} =  - \frac{\sign(z)}{
\sqrt[4]{z^3(z-4)} } \,
\left( 1 +\sqrt{ \frac{z}{z-4} } \right)^{1/2},
\end{equation}
where $\sign(z)$ is the sign of $z,$ i.e. equal to $1$ if $z>0,$ to $0$ if $z=0$ and $-1$ if $z<0.$
\end{lemma}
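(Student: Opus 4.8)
The plan is to reduce the trigonometric integral to the classical formula $\int_\T (A - \cos q)^{-1}\,dq = 2\pi/\sqrt{A^2-1}$ by factoring the quartic denominator. Writing $\dispersion(q) - z = (1 - \cos q - \sqrt z)(1 - \cos q + \sqrt z)$ and using the partial fraction decomposition
$$
\frac{1}{\dispersion(q) - z} = \frac{1}{2\sqrt z}\left(\frac{1}{1 - \cos q - \sqrt z} - \frac{1}{1 - \cos q + \sqrt z}\right),
$$
I would split the integral into two pieces, each of the form $\int_\T (A - \cos q)^{-1}\,dq$ with $A = 1 \mp \sqrt z$.

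First I would establish (or quote) the classical identity $\int_\T \frac{dq}{A - \cos q} = \frac{2\pi}{\sqrt{A^2-1}}$ for $A \in \C\setminus[-1,1]$, where the branch of the square root is fixed by requiring $\sqrt{A^2-1}\sim A$ as $|A|\to\infty$; this follows in one line from the substitution $\zeta = e^{iq}$ and a residue at the unique root $A - \sqrt{A^2-1}$ of $\zeta^2 - 2A\zeta + 1$ lying inside the unit circle. Applying it with $A = 1\mp\sqrt z$ and using $A^2 - 1 = \sqrt z(\sqrt z \mp 2)$ turns the integral into an explicit algebraic expression in $\sqrt z$, $\sqrt{\sqrt z - 2}$ and $\sqrt{\sqrt z + 2}$.

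The remaining work is algebraic simplification. Combining the two terms over a common denominator produces the nested radical $\sqrt{\sqrt z + \sqrt{z-4}}$, which I would extract from the identity $\bigl(\sqrt{\sqrt z + 2}\pm\sqrt{\sqrt z - 2}\bigr)^2 = 2\sqrt z \pm 2\sqrt{z-4}$; then matching $\sqrt[4]{z^3(z-4)} = z^{3/4}(z-4)^{1/4}$ together with $\bigl(1 + \sqrt{z/(z-4)}\bigr)^{1/2} = (z-4)^{-1/4}\sqrt{\sqrt z + \sqrt{z-4}}$ recovers the stated right-hand side.

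Since both sides are real on $\R\setminus[0,4]$, it suffices to verify the formula separately on $z>4$ and $z<0$, and this case split is where the main obstacle lies. For $z>4$ the quantities $\sqrt z > 2$ and $\sqrt{z-4}$ are real and positive and the computation is direct; the denominator $\dispersion(q) - z$ is negative, matching the overall minus sign with $\sign(z)=1$. For $z<0$ the number $\sqrt z$ is purely imaginary, so the two classical integrals are genuinely complex and the branches of $\sqrt{A^2-1}$ must be tracked carefully; the net effect is a change of sign relative to the $z>4$ regime, which is precisely what the factor $-\sign(z)$ encodes. I expect this branch analysis for $z<0$—equivalently, justifying the choice of $\sqrt{A^2-1}$ consistently across the two regimes—to be the delicate point, while the sign of the answer can be cross-checked independently from the fact that $\dispersion(q) - z \ge |z| > 0$ there forces the integral to be positive.
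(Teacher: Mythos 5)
Your proposal is correct, but it takes a genuinely different route from the paper. The paper substitutes $\xi=e^{iq}$ at the outset and evaluates the resulting contour integral $-4i\int_{|\xi|=1}\xi\,d\xi/\bigl((\xi-1)^4+4\alpha^4\xi^2\bigr)$ (for $z=-\alpha^4<0$) by locating the two roots $\xi_0,\overline{\xi_0}$ of the quartic inside the unit circle, summing residues, and simplifying via $(\xi_0-1)^3=-4\xi_0^2/(\xi_0-1)$; reality of the answer comes from pairing the conjugate roots, and branch issues never arise because one works with explicit real and imaginary parts. You instead factor $\dispersion(q)-z=(1-\cos q-\sqrt z)(1-\cos q+\sqrt z)$, take partial fractions, and invoke the classical identity $\int_\T(A-\cos q)^{-1}dq=2\pi/\sqrt{A^2-1}$ twice with $A=1\mp\sqrt z$ (legitimate exactly because $z\notin[0,4]$ forces $A\notin[-1,1]$) --- a more modular and elementary reduction, at the price of the branch bookkeeping you flag. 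That flagged point does resolve cleanly, and in fact your conjugate-pair structure plays precisely the role of the paper's $\xi_0,\overline{\xi_0}$ pairing: for $z<0$ one has $A_\pm=1\pm i\sqrt{|z|}=\overline{A_\mp}$, and since the branch $\sqrt{A^2-1}\sim A$ commutes with complex conjugation, writing $u+iv:=\sqrt{A_+^2-1}$ with $u,v>0$, so that $u^2-v^2=-|z|$ and $uv=\sqrt{|z|}$, one gets
\begin{equation*}
\int_\T\frac{dq}{\dispersion(q)-z}
=\frac{\pi}{i\sqrt{|z|}}\Bigl(\frac{1}{u-iv}-\frac{1}{u+iv}\Bigr)
=\frac{2\pi v}{\sqrt{|z|}\,(u^2+v^2)}
=\frac{\sqrt2\,\pi}{|z|^{3/4}\sqrt{|z-4|}}\,\sqrt{\sqrt{|z-4|}+\sqrt{|z|}},
\end{equation*}
which is manifestly positive and is exactly the paper's form \eqref{Iz_def1} of the claim; your positivity cross-check independently pins the sign. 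What each approach buys: yours reduces everything to a textbook one-parameter integral and nested-radical algebra, and the analytic branch $\sqrt{A^2-1}\sim A$ makes the two regimes $z>4$ and $z<0$ instances of one formula; the paper's direct residue computation trades that for the concrete (but unproved-in-detail) claim that the quartic's roots $\xi_0,\overline{\xi_0}$ lie in the open unit disk. Note also that both arguments carry out one regime in full and declare the other analogous --- the paper details $z<0$, while your easy case is $z>4$.
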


\begin{proof}
We establish \eqref{Iz_def} for $z<0;$ one can prove it for $z>4$ using similar arguments.
For simplicity, set
$z=-\alpha^4$ for $\alpha>0.$ Using the Euler formula $e^{iq} = \cos q+i\sin q,$
we rewrite the integral as 
$$
I(z):=\int_\T\frac{dq}{\dispersion(q) - z} =\int_{\T} \frac{dq}{\big(1 - 
\frac{e^{iq} +e^{-iq}}{2}\big)^2 +\alpha^4} = 
\int_{\T} \frac{4e^{2iq}dq}{\big(e^{2iq}- 2e^{iq}+1\big)^2 +4\alpha^4e^{2iq}} 
$$
so that 
$$
I(z)
=
-4i\int_\T \frac{e^{iq}de^{iq}}{ (e^{iq}- 1)^4 +4\alpha^4e^{2iq}}.
$$
Changing variables  $\xi=e^{iq}$ we see that 
$$
I(z)
=
-4i\int_{|\xi|=1} \frac{\xi d\xi}{(\xi- 1)^4 +4\alpha^4\xi^2}.
$$
Since the only zeros $\xi_0:= (1 - \frac{\alpha}{A}) + i(A\alpha-\alpha^2)$ and $\overline{\xi_0}$  of the fourth-order polynomial 
$(\xi- 1)^4 +4\alpha^4\xi^2$ with real coefficients belong to the ball $\{|\xi|<1\},$ where 
\begin{equation}\label{def_A}
A:=\sqrt{\frac{\sqrt{4+\alpha^4} +\alpha^2}{2}}, 
\end{equation}
by the  Residue Theorem for analytic functions,
$$
I(z)= 8\pi \Big(\frac{\xi_0}{4(\xi_0- 1)^3 + 8\alpha^4\xi_0} + \frac{\overline{\xi_0}}{4(\overline{\xi_0}- 1)^3 + 8\alpha^4\overline{\xi_0}} \Big)=
4\pi\,\Re\,\frac{\xi_0}{(\xi_0- 1)^3 + 2\alpha^4\xi_0}. 
$$
Since $(\xi_0- 1)^4 +4\alpha^4\xi_0^2=0,$ one has  $(\xi_0- 1)^3 = -\frac{4\xi_0^2}{\xi_0-1},$ thus,
$$
I(z)= -\frac{2\pi}{\alpha^4}\,\Re\,\frac{\xi_0-1}{\xi_0+1}. 
$$
Using the definition \eqref{def_A} of $A,$ by the direct computation one finds 
$$
\frac{\xi_0-1}{\xi_0+1} = -\frac{A^2 + i}{A\sqrt{4+\alpha^4}}\,\alpha,  
$$
therefore, 
$$
I(z) = \frac{2A\pi }{\alpha^3\,\sqrt{4+\alpha^4}},
$$
and \eqref{Iz_def} follows.
\end{proof}

\begin{proof}[Proof of Theorem \ref{teo:main_result}]

Rewriting \eqref{Iz_def} as
\begin{equation}\label{Iz_def1}
\frac1{\sqrt{2}\pi}\,\int_\T\frac{dq}{\dispersion(q) - z} =  - \frac{\sign(z)}{
\sqrt[4]{|z|^3}\,\sqrt{|z-4|}  } \,
\sqrt{\sqrt{|z-4|} +\sqrt{|z|}},
\end{equation}
we observe that
\begin{equation}\label{ahsbda}
\lim\limits_{z\nearrow 0} \int_\T\frac{dq}{\dispersion(q) - z} =+\infty,\qquad  \lim\limits_{z\searrow 4} \int_\T\frac{dq}{\dispersion(q) - z} =-\infty. 
\end{equation}
Since 
\begin{equation}\label{Delta_xos}
\Delta\ge1   
\end{equation}
for $(\mu;z)\in [0,+\infty)\times(4,+\infty)$ and for $(\mu;z)\in (-\infty, 0]\times(-\infty,0],$
by
\eqref{ahsbda}, \eqref{Delta_xosdda} and the strict monotonicity and the analyticity of $z\in\R\setminus[0,4]\mapsto \Delta(\mu;z),$ for any $\mu\ne0$ there exists a unique $e(\mu)\in \R\setminus[0,4]$ such that 
$
\Delta(\mu;e(\mu)) = 0.                                                                                                                                                                                  $  

(a) By Lemma \ref{lem:determinantga_kelish}, $e(\mu)$ is the unique eigenvalue of $\hamiltonian_\mu$ with the associated eigenfunction \eqref{eig_function}.
Moreover, from \eqref{Delta_xos} it follows that $e(\mu)<0$ for $\mu>0$ and $e(\mu)>4$ for $\mu<0.$

(b) By the Implicit Function Theorem, the function $\mu\in \R\setminus\{0\}\mapsto e(\mu)$ is real-analytic. Moreover, computing the derivatives of the implicit function $e(\mu)$ we find:
\begin{equation}\label{e_mu_der}
e'(\mu) = - \frac{1}{\mu}\,\int_\T \frac{dq}{\dispersion(q) - e(\mu)}\,\Big( \int_\T \frac{dq}{(\dispersion(q) - e(\mu))^2}\Big)^{-1},\qquad \mu\ne0, 
\end{equation}
thus, using $\mu(\dispersion(q) - e(\mu))>0$ we get $e'(\mu)<0,$ i.e. $e(\cdot)$ is strictly decreasing in $\R\setminus\{0\}.$
Differentiating \eqref{e_mu_der} once more and using $\mu\int_\T\frac{dq}{\dispersion(q) - e(\mu)}=1$ we get
$$
e''(\mu) = \frac{2e'(\mu)}{\mu}\,\left(1 - \mu e'(\mu)\,\int_\T \frac{dq}{(\dispersion(q) - e(\mu))^3}\,\left( \int_\T \frac{dq}{(\dispersion(q) - e(\mu))^2}\right)^{-1}\right).
$$
Therefore, $e''(\mu)>0$ (i.e. $e(\cdot)$ is strictly convex) for $\mu<0$ and $e''(\mu)<0$ (i.e. $e(\cdot)$ is strictly concave) for $\mu>0.$

By \eqref{Iz_def}, $e(\mu)$ solves 
\begin{equation}\label{mu_nega}
\sqrt{2} = \frac{-\mu}{\sqrt[4]{e(\mu)^3}\,\sqrt{e(\mu)-4}  } \,
\sqrt{\sqrt{e(\mu)-4} +\sqrt{e(\mu)}}\qquad\text{for $\mu<0$} 
\end{equation}
and
\begin{equation}\label{mu_posi}
\sqrt{2} = \frac{\mu}{\sqrt[4]{-e(\mu)^3}\,\sqrt{4-e(\mu)}  } \,
\sqrt{\sqrt{4- e(\mu) } +\sqrt{ -e(\mu)}}\qquad\text{for $\mu>0.$}
\end{equation}
The strict monotonicity of $e(\cdot)$ and \eqref{mu_nega} and \eqref{mu_posi} imply that 
\begin{equation}\label{mina_senga_asympto}
\lim\limits_{\mu\to-\infty} e(\mu) = +\infty,\qquad 
\lim\limits_{\mu\nearrow 0}e(\mu) = 4,\qquad 
\lim\limits_{\mu\searrow 0}e(\mu) = 0,\qquad \lim\limits_{\mu\to+\infty} e(\mu) = -\infty,  
\end{equation}
hence, $e(\cdot)$ has a jump at $\mu=0.$ 
In particular, from \eqref{mu_nega} and \eqref{mina_senga_asympto} we obtain 
$$
\lim\limits_{\mu\to-\infty} \frac{e(\mu)}{-\mu} = 1 \qquad \text{and}\qquad 
\lim\limits_{\mu\nearrow0} \frac{\sqrt{4-e(\mu)}}{-\mu} =\frac{1}{\sqrt{2}}\lim\limits_{\mu\nearrow0} \frac{1}{\sqrt{e(\mu)}} = \frac{1}{2\sqrt2}.
$$
Analogously, by \eqref{mu_posi} and \eqref{mina_senga_asympto},
$$
\lim\limits_{\mu\to+\infty} \frac{-e(\mu)}{\mu} = 1 \qquad \text{and}\qquad 
\lim\limits_{\mu\searrow0} \frac{\sqrt[4]{-e(\mu)^3}}{\mu} =   \frac{1}{\sqrt{2}}\lim\limits_{\mu\searrow0} \frac{1}{\sqrt[4]{4-e(\mu)}} = \frac{1}{2}.
$$

Now we establish \eqref{asymp1}. Setting $\alpha:=\alpha(\mu)=\sqrt{e(\mu) - 4}$ we rewrite \eqref{mu_nega}  as 
\begin{equation}\label{eq_for_alpha1}
\alpha = \frac{-\mu}{2^{3/2}(1+\frac{\alpha}{4})^{3/4}} \,\Big(\Big(1 + \frac{\alpha^2}{4}\Big)^{1/2} + \frac{\alpha}{2}\Big)^{1/2}. 
\end{equation}
By (b), $\mu\in(-\infty,0)\mapsto \alpha(\mu)$ is strictly decreasing. In view of \eqref{mina_senga_asympto} there exists  $\gamma^1>0$ such that $\alpha\in(0,1)$ for any $\mu\in (-\gamma^1,0).$ Using  
\begin{equation}\label{ildiz_x}
(1+x)^{1/2} = 1 + \frac x2 + \sum\limits_{n\ge2} \frac{(-1)^{n-1}}{2^nn!}\,\prod_{j=0}^{n-2}(1+2j)\,x^n 
\end{equation}
for $|x|<1,$  one has
\begin{align*}
\Big(\Big(1 + \frac{\alpha^2}{4}\Big)^{1/2} + &\frac{\alpha}{2}\Big)^{1/2} =   
\Big(1 + \frac{\alpha}{2} +\frac{\alpha^2}{8} +\sum\limits_{n\ge2} \frac{(-1)^{n-1}}{8^nn!}\prod\limits_{j=0}^{n-2}(1+2j) \alpha^{2n} \Big)^{1/2}\\
= & 1 + \frac{\alpha}{4} +\frac{\alpha^2}{16} +\sum\limits_{m\ge2} \frac{(-1)^{m-1}}{2\cdot 8^mm!}\prod\limits_{j=0}^{m-2}(1+2j) \alpha^{2m}\\
&+ \sum\limits_{n\ge2} \frac{(-1)^{n-1}}{ 2^nn!}  \prod\limits_{j=0}^{n-2}(1+2j)   
\Big(\frac{\alpha}{2} +\frac{\alpha^2}{8} +\sum\limits_{m\ge2} \frac{(-1)^{m-1}}{8^mm!}\prod\limits_{j=0}^{m-2}(1+2j) \alpha^{2m}\Big)^n. 
\end{align*}
Thus, using 
$$
(1 + x)^{-3/4} = 1+ \sum\limits_{n\ge1} \frac{(-1)^n}{4^nn!}\prod\limits_{j=0}^{n-1} (3+4j) x^n
$$
for $|x|<1$ we get 
\begin{equation}\label{aadfa}
\frac{1}{(1+\frac{\alpha}{4})^{3/4}} \,\Big(\Big(1 + \frac{\alpha^2}{4}\Big)^{1/2} + \frac{\alpha}{2}\Big)^{1/2} = 1-\frac{\alpha}{16}+\frac{13\alpha^2}{512} +\sum\limits_{n\ge3} c_n\alpha^n,
\end{equation}
where $\{c_n\}$ are some real coefficients. Using \eqref{aadfa} we rewrite \eqref{eq_for_alpha1} as 
\begin{equation}\label{sadadada}
\alpha = -\frac{\mu}{2^{3/2}}\,\Big(1-\frac{\alpha}{16}+\frac{13\alpha^2}{512} +\sum\limits_{n\ge3} c_n\alpha^n\Big)
\end{equation}
for $\mu\in(-\gamma^1,0).$
Setting $\alpha= -\frac{\mu}{2^{3/2}}(1+u)$ we can represent \eqref{sadadada} as 
$$
F(u,\mu) =0,
$$
where 
$$
F(u,\mu):=u- \frac{\mu(1+u)}{16\cdot 2^{3/2}} - \frac{13\mu^2(1+u)^2}{2048} - \sum\limits_{n\ge3} \frac{c_n}{2^{3n/2}}\,\mu^n(1+u)^n
$$
is real-analytic in a neighborhood of $(u,\mu)=(0,0),$ 
$$
F(0,0)=0,\qquad F_u(0,0) =1.
$$
Hence, by the Analytic Implicit Function Theorem, there exist $\gamma>0$ and a unique function $u=u(\mu)$ given by a series $u(\mu) = \sum\limits_{n\ge1}a_n\mu^n$ with real coefficients $\{a_n\}$  and absolutely convergent for $|\mu|<\gamma$ such that $F(u(\mu),\mu)\equiv0$ for any $|\mu|<\gamma.$
Inserting $\alpha= -\frac{\mu}{4^{3/4}}(1+u)$ and   $u(\mu) = \sum\limits_{n\ge1}a_n\mu^n$ in 
\eqref{sadadada} one finds inductively $a_1=\frac{1}{32\sqrt2},$ $a_2=\frac{27}{4096}$ and so on.
This implies \eqref{asymp1}.

Now we prove \eqref{asymp2}.
Set $\mu:=\lambda^3$ and $e(\mu) = -\alpha^4,$ where $\alpha:=\alpha(\lambda)>0$ is the strictly increasing function of $\lambda>0.$ Then \eqref{mu_posi} is rewritten as
\begin{equation}\label{eq_for_alpha}
\sqrt2 = \frac{\lambda^3}{\alpha^3\sqrt{4+\alpha^4}}\,\sqrt{\sqrt{4+\alpha^4} +\alpha^2}. 
\end{equation}
We solve this equation with respect to $\alpha.$ 
To this aim first we rewrite it the right hand side of \eqref{eq_for_alpha} as an absolutely convergent series of $\alpha,$ and then from the Implicit Function Theorem in analytical case we deduce that 
$\alpha$ has a convergent power series in $\lambda$ and the coefficients of the series will be found inductively from the series  in $\alpha$. 

By the strict decrease of $\alpha(\cdot)$ and asymptotics \eqref{mina_senga_asympto} there exists a unique $\gamma_1>0$ such that $|\alpha(\lambda)|<1$ for any $\lambda\in(0,\gamma_1).$ 
Recalling for $|x|<1$ that 
$$
(1+x)^{-1/4} = 1 + \sum\limits_{n\ge1} \frac{(-1)^n}{4^nn!}\,\prod_{j=0}^{n-1}(1+4j)\,x^n,
$$
and
$$
(1+x)^{-1/2} = 1 + \sum\limits_{n\ge1} \frac{(-1)^n}{2^nn!}\,\prod_{j=0}^{n-1}(1+2j)\,x^n,
$$
as well as \eqref{ildiz_x}
we get 
$$
\frac{\sqrt2}{(4+\alpha^4)^{1/4}} = 1+
\sum\limits_{n\ge1} \frac{(-1)^n}{16^nn!}\,\prod_{j=0}^{n-1}(1+4j)\,\alpha^{4n}.
$$
and 
$$
\frac{\alpha^2}{\sqrt{4+\alpha^4} }= \frac{\alpha^2}{2}\,
\Big(1+ \frac{\alpha^4}{4}\Big)^{-1/2} = \frac{\alpha^2}{2} + 
\sum\limits_{n\ge1} \frac{(-1)^n}{2\cdot 8^nn!}\,\prod_{j=0}^{n-1}(1+2j)\,\alpha^{4n+2}
$$
so that 
\begin{align*}
\sqrt{1+\frac{\alpha^2}{\sqrt{4+\alpha^4} }} =& 1+
\frac{\alpha^2}{4} + 
\sum\limits_{n\ge1} \frac{(-1)^n}{4\cdot 8^nn!}\,\prod_{j=0}^{n-1}(1+2j)\,\alpha^{4n+2}   \\ %
+ & \sum\limits_{n\ge 2} \frac{(-1)^{n-1}}{2^nn!}\,\prod_{j=0}^{n-2}(1+2j)\,
\Big(\frac{\alpha^2}{2} + 
\sum\limits_{m\ge1} \frac{(-1)^m}{2\cdot 8^mm!}\,
\prod_{j=0}^{m-1}(1+2j)\,\alpha^{4m+2}\Big)^n.
\end{align*}
This implies 
$$
\frac{\sqrt2}{(4+\alpha^4)^{1/4}}\,
\sqrt{1+\sqrt{\frac{\alpha^4}{4+\alpha^4} }} = 
1 + \frac{\alpha^2}{4} - \frac{\alpha^4}{32} +\sum\limits_{n\ge3} C_n\alpha^{2n},
$$
where $C_n,$ $n=3,4,\ldots,$ are real coefficients.
Hence for $\lambda\in(0,\gamma_1)$ the equation \eqref{eq_for_alpha} is represented as 
\begin{equation}\label{dadfwe}
2\alpha^3 = \lambda^3\Big(1 + \frac{\alpha^2}{4} - \frac{\alpha^4}{32} +\sum\limits_{n\ge3}
C_n\alpha^{2n}\Big). 
\end{equation}
Let $\alpha = \lambda(2^{-1/3} + u)$ we rewrite  \eqref{dadfwe} as 
\begin{equation}\label{dasgg}
2^{1/3}u+2^{2/3}u^2 +\frac23\,u^3 = \frac{\lambda^2(2^{-1/3} + u)^2}{12} 
- \frac{\lambda^4(2^{-1/3} + u)^4}{96} + \sum\limits_{n\ge3} 
\frac{C_n}{3}\, \lambda^{2n}(2^{-1/3} + u)^{2n}.
\end{equation}
For $\mu:=\lambda^2,$ \eqref{dasgg} is represented as 
$$
F(u,\mu)=0,
$$
where 
$$
F(u,\mu):=2^{1/3}u+2^{2/3}u^2 +\frac23\,u^3 - \frac{\mu(2^{-1/3} + u)^2}{12} + \frac{\mu^2(2^{-1/3} + u)^4}{96} - \sum\limits_{n\ge3} \frac{C_n}{3}\, \mu^{n}(2^{-1/3} + u)^{2n}
$$
is analytic in a neighborhood of $(u,\mu)=(0,0),$ and satisfies 
$$
F(0,0) = 0\qquad  \text{and} \qquad F_u(0,0)=2^{1/3}>0.
$$ 
Hence, by the Implicit Function Theorem 
in analytical case there exists $\gamma>0$ and a real-analytic function 
$u=u(\mu)$ given by the absolutely convergent series $u(\mu)=\sum\limits_{n\ge1}a_n\mu^n$ with $\{a_n\}\subset\R$ for $|\mu|<\gamma$  and $F(u(\mu),\mu)\equiv0$ for any $\mu\in(-\gamma,\gamma).$ Inserting  $u(\mu)=\sum\limits_{n\ge1}a_n\mu^n$ in \eqref{dasgg} we find the coefficients $a_k$ inductively: 
$$
a_1=\frac{1}{24},\qquad a_2=-\frac{2^{1/3}}{288}
$$ 
and so on. 
Therefore,
$$
\alpha(\lambda)=2^{1/3}\lambda+\frac{1}{24}\,\lambda^3-\frac{2^{1/3}}{288}\,\lambda^5+\sum\limits_{n\ge3} a_n\lambda^{2n+1}.
$$
Now the definitions of $\alpha$ and $\lambda$ imply \eqref{asymp2}.
\end{proof}

\section*{Acknowledgments}
The first author acknowledges support from the Austrian Science Fund (FWF) project M~2571-N32.


\begin{thebibliography}{oo}
\bibitem{ALM:04:Puan} S. Albeverio, S. Lakaev, Z. Muminov: Schr\"{o}dinger  operators on lattices. The Efimov effect and discrete spectrum asymptotics. Ann. Inst. H. Poincar\'e Phys. Theor. {\bf 5} (2004), 743--772.

\bibitem{ALMM:2006}   S. Albeverio, S. Lakaev, K. Makarov, Z. Muminov: The threshold effects for the two-particle Hamiltonians on lattices.
Commun. Math. Phys. {\bf 262} (2006), 91--115.









\bibitem{GSch:97} { G. Graf, D. Schenker:} 2-magnon scattering in 
the Heisenberg model. Ann. Inst. Henri Poincar\'e, Phys. Th\'eor. {\bf 67} (1997),
91--107.




\bibitem{HHV:2018} V. Hoang, D. Hundertmark, J. Richter, S. Vugalter:  Quantitative bounds versus existence of weakly coupled bound states for Schr\"odinger type operators. arXiv:1610.09891 [math-ph].




\bibitem{JBC:1998} { D. Jaksch {\it et al}:}
Cold bosonic atoms in optical lattices. Phys. Rev. Lett. {\bf 81}
(1998), 3108--3111.
 
\bibitem{K:1977}  M. Klaus: On the bound state of Schr\"odinger operators in one dimension. Ann.
Physics {\bf 108} (1977), 288--300.
 
 
\bibitem{KS:1980} M. Klaus, B. Simon: 
Coupling constant thresholds in nonrelativistic quantum mechanics. I. Short-range two-body case. Ann. Phys.
{\bf 130} (1980), 251--281.




\bibitem{LKL:2012} { S. Lakaev, A. Khalkhuzhaev, Sh. Lakaev:}
Asymptotic behavior of an eigenvalue of the two-particle
discrete Schr\"odinger operator.
Theoret. Math. Phys. {\bf 171}  (2012), 800--811.

\bibitem{LH:2011} { S. Lakaev, Sh. Kholmatov:}
 Asymptotics of eigenvalues of two-particle
Schr\"odinger operators on lattices with zero range
interaction. J. Phys. A: Math. Theor. {\bf 44} (2011). 

\bibitem{LH:2012} S. Lakaev, Sh. Kholmatov: Asymptotics of the eigenvalues of a discrete Schr\"odinger operator with zero-range potential. Izvestiya: Mathematics {\bf76} (2012), 946--966. 

\bibitem{LSA:2012} { M. Lewenstein, A. Sanpera, A. Ahufinger:}
 Ultracold Atoms in Optical Lattices.
Simulating Quantum Many-Body Systems. 
Oxford University Press, Oxford, 2012.

\bibitem{Mat} { D. Mattis:} {The few-body problem
on a lattice,} Rev. Mod. Phys. {\bf 58:2} (1986),
361--379.


\bibitem{Mog} { A. Mogilner:} Hamiltonians in solid-state physics
as multiparticle discrete Schr\"odinger
operators: problems and results.
Adv. in Sov. Math. {\bf 5} (1991), 139--194.





\bibitem{S:1976} B. Simon: The bound state of weakly coupled Schr\"odinger operators in one and two
dimensions. Ann. Physics {\bf97} (1976), 279--288.







\bibitem{Wall:2015} { M. Wall:} Quantum Many-Body Physics of Ultracold 
Molecules in Optical Lattices. Models and Simulation Models. 
Springer Theses, Cham-Heidelberg-New York, 2015.

\bibitem{Wink:2006} K. Winkler {\it et al.}: Repulsively bound atom 
pairs in an optical lattice. Nature {\bf 441} (2006),  853--856.




\end{thebibliography}
\end{document}